\renewcommand*\subjclass[2][2000]{%
  \def\@subjclass{#2}%
  \@ifundefined{subjclassname@#1}{%
    \ClassWarning{\@classname}{Unknown edition (#1) of Mathematics
      Subject Classification; using '1991'.}%
  }{%
    \@xp\let\@xp\subjclassname\csname subjclassname@#1\endcsname
  }%
}
\newtheorem{theorem}{Theorem}[section]
\newtheorem*{lemma*}{Lemma}
\newtheorem{proposition}[theorem]{Proposition}
\def\1ton{1,2,\ldots,n}
\theoremstyle{definition}
\theoremstyle{remark}
\newtheorem{remark}[theorem]{Remark}
\numberwithin{equation}{section}
 \DeclareMathOperator{\re}{Re}
\def\XXint#1#2#3{{\setbox0=\hbox{$#1{#2#3}{\int}$}
\vcenter{\hbox{$#2#3$}}\kern-.5\wd0}}
\def\ge{\geqslant}
\begin{document}
\title[Riesz and Kolmogorov inequality for harmonic mappings ]{Riesz and Kolmogorov inequality for harmonic quasiregular mappings }  \subjclass[2020]{Primary 30H10 }


\keywords{Harmonic mappings, quasiregular mappings, Riesz inequality, Kolmogorov inequality}
\author{David Kalaj}
\address{University of Montenegro, Faculty of Natural Sciences and
Mathematics, Cetinjski put b.b. 81000 Podgorica, Montenegro}
\email{davidk@ucg.ac.me}


\begin{abstract}
Let $K\ge 1$ and $p\in(1,2]$.
We obtain asymptotically sharp constant $c(K,p)$, when $K\to 1$ in the inequality $$\|\Im f\|_{p}\le c(K,p)\|\Re(f)\|_p$$ where $f\in \mathbf{h}^p$ is a $K-$quasiregular harmonic mapping in the unit disk belonging to the Hardy space $\mathbf{h}^p$, under the conditions $\arg(f(0))\in (-\pi/(2p),\pi/(2p))$ and $f(\mathbb{D})\cap(-\infty,0)=\emptyset$. The paper improves a recent result by Liu and Zhu in \cite{aimzhu}.  We also extend this result for the quasiregular  harmonic mappings in the unit ball in $\mathbb{R}^n$. Finally, the Kolmogorov theorem for quasiregular harmonic mappings in the plane is considered.
\end{abstract}
\maketitle
\tableofcontents
\section{Introduction}
Let $\mathbb{D}$ denote the unit disk and $\mathbf{T}$ the unit circle in the complex plane.
For $p>1$, we define the Hardy class $\mathbf{h}^p$ as the class of harmonic mappings $f=g+\bar h$, where $g$ and $h$ are holomorphic mappings defined on the unit disk $\mathbb{D},$ so that $$\|f\|_p=\|f\|_{\mathbf{h}^p}=\sup_{0<r< 1} M_p(f,r)<\infty,$$ where $$M_p(f,r)=\left(\int_{\mathbf{T}}|f(r\zeta)|^p d\sigma(\zeta)\right)^{1/p}.$$ Here $d\sigma(\zeta)=\frac{dt}{2\pi},$ if $\zeta=e^{it}\in \mathbf{T}$. The subclass of holomorphic mappings that belongs to the class $\mathbf{h}^p$ is denoted by $H^p$.

If $f\in \mathbf{h}^p$, then it is well-known that there exists
$$f(e^{it})=\lim_{r\to 1} f(re^{it}),   a.e.$$  and $f\in L^p(\mathbf{T}).$
Then there hold
\begin{equation}\label{come}\|f\|^p_{{\mathbf{h}^p}}=\lim_{r\to 1}\int_{0}^{2\pi}|f(re^{it})|^p \frac{dt}{2\pi}= \int_{0}^{2\pi}|f(e^{it})|^p \frac{dt}{2\pi}.\end{equation}

Similarly, we define the {\it Hardy space} $\mathbf{h}^p$ of harmonic functions in the unit ball $\mathbb{B}\subset \mathbb{R}^n$. Let $\mathbb{S}=\partial \mathbb{B}$. Then we say a harmonic function $u$ in $\mathbb{B}$ belongs to the Hardy space $\mathbf{h}^p$ if $$\|f\|_p=\|f\|_{\mathbf{h}^p}=\sup_{0<r< 1} M_p(f,r)<\infty,$$ where $$M_p(f,r)=\left(\int_{\mathbb{S}}|f(r\zeta)|^p d\sigma(\zeta)\right)^{1/p}.$$ Here $d\sigma$ is the  surface $n-1$ dimensional
measure of the Euclidean sphere which satisfies the condition:
$\sigma(\mathbb{S})=1$. The Hardy space of harmonic function in the space has similar properties as in the plane (see e.g. \cite{ABR}).

Let $1<p<\infty$ and let $\overline p=\max\{p,p/(p-1)\}$. Verbitsky in \cite{ver} proved the following results.  If $f=u+iv\in H^p$ and $v(0)=0$, then  \begin{equation}\label{ver}\sec(\pi/(2\overline p))\|v\|_p\leq\|f\|_p,\end{equation}
and
\begin{equation}\label{1ver}\|f\|_p\leq\csc(\pi/(2\overline p))\|u\|_p,\end{equation}
and both estimates are sharp. Those results improve the sharp inequality \begin{equation}\label{pico}\|v\|_p\leq\cot(\pi/(2\overline p))\|u\|_p\end{equation} found by S.  Pichorides (\cite{pik}). For some related results see \cite{essen, studia, graf, verb2}.

Then those results have been extended by the author in \cite{tams}.  As a byproduct, the author proved a Hollenbeck-Verbitsky conjecture for the case $s=2$.

Further, those results have been extended by Markovi\'c and Melentijevi\'c in \cite{melmar} and Melentijevi\'c in \cite{mel}. Melentijevi\'c  proved  a Hollenbeck-Verbitsky conjecture for the case $s<\sec^2(\pi/(2p))$, $p\le 4/3$ or $p\ge 2$.

Recently Liu and Zhu in \cite{aimzhu} generalized Riesz conjugate functions theorem for
planar harmonic $K$-quasiregular mappings (when $1 < p \le
2$) provided that the real part does not vanish at the unit disk. Their result is asymptotically sharp when $K\to 1$.  Moreover, they extended this result for invariant harmonic quasiconformal mappings in the unit ball also assuming that the first coordinate is non-vanishing.

In this paper, we will remove the assumption that the real part of the mapping does not vanish. Moreover, our approach works for harmonic quasiregular mappings in the space.

\subsection{Quasiregular and harmonic mappings}
A continuous and nonconstant mapping $f : G\to \Bbb R^n$, $n\ge
2$, in the local Sobolev space $W^{1,n}_{loc} (G,\Bbb R^n)$ is
$K$-quasiregular, $K\ge 1$, if $$|Df(x)| \le K \ell(f' (x))$$ for almost
every $x\in G$, where $G$ is an open subset of $\mathbb{R}^n$. Here $Df(x)$ is the formal differential matrix and $$|Df(x)|=\sup_{|h|=1}|f'(x)h|,\ \  \ell(Df(x))=\inf_{|h|=1}|f'(x)h|.$$

Let $(Df(x))^t$ be the transpose of the matrix $Df(x)$. Then we define the Hilbert norm of $D f(x)$
by the formula $$\|Df(x)\|=\sqrt{\mathrm{trace}\left( Df^{t}Df\right)}=\sqrt{\sum_{k=1}^n\lambda_k},$$ where $\lambda_1\le \dots \le \lambda_n$ are eigenvalues of the matrix $Df^{t}Df$.

Observe that $$|Df^t(x)|=|Df(x)|, \ \  \ell(Df(x))=\ell (Df^t(x)).$$ Further if $f$ is $K$-quasiregular, then $$\nabla |f(x)|=\nabla \sqrt{\left<f(x), f(x)\right>}= Df^t(x) \frac{f(x)}{|f(x)|}.$$ Thus

\begin{equation}\label{inprof}
|\nabla |f(x)||\ge \ell(Df(x))\ge \frac{|Df(x)|}{K}.
\end{equation}
If $n=2$, then we use the notation  $k-$quasiconformal mapping for $k=(K-1)/(K+1)$.

A smooth mapping $w:G\to \mathbb{R}^n$ is called harmonic if it satisfies the Laplace equation $\Delta u=0$.
The solution of the equation $\Delta w=g$ (in the sense of
distributions see \cite{Her}) in the  ball $B_R=R\cdot \mathbb{B}$, satisfying the
boundary condition $w|_{S_R}=f\in L^1(S_R)$, where $S_R=R\cdot\mathbb{S}$ is given by
\begin{equation}\label{green}w(x)=\int_{S_R}P(x,\eta)f(\eta)d\sigma(\eta)-\int_{B_R}G(x,y)g(y)dV(y),\,
|x|<1.\end{equation} Here
\begin{equation}\label{poisson}P(x,\eta)=\frac{R^2-|x|^2}{R|x-\eta|^n}\end{equation}
is the Poisson kernel and $d\sigma$ is the  surface $n-1$ dimensional
measure of the Euclidean sphere which satisfies the condition:
$\int_{\mathbb{S}}d\sigma(\eta)\equiv 1$. The first integral
in (\ref{green}) is called the Poisson integral and is usually
denoted by $P[f](x)$. It is a harmonic mapping. The function
\begin{equation}\label{green1}G(x,y)=\left\{
                                       \begin{array}{ll}
                                         \frac{1}{2\pi}\log \frac{R|x-y|}{|R^2-x\overline{y}|}, & \hbox{for $n=2$;} \\
                                         c_n\left(\frac{1}{|x-y|^{n-2}}
-\frac{1}{(R^2+|x|^2|y|^2/R^2-2\left<x,y\right>)^{(n-2)/2}}\right), & \hbox{for $n\ge 3$,}
                                       \end{array}
                                     \right.\end{equation}
where
\begin{equation}\label{cen}c_n=\frac{1}{(n-2)\omega_{n-1}}\end{equation} and
$\omega_{n-1}$ is the measure of $\mathbb{S}$, is the Green function of
the  ball $B_R$. The Poisson kernel and the Green function are
harmonic in $x$.  

\section{Main results}
The main result of this paper are following theorems
\begin{theorem}\label{david} Let $$c_n^p(K,p)=\frac{(1+(n-1)K^2)(1 +\frac{(p-2)}{n K^2})}{(p-1)}.$$
\begin{enumerate}\item[a)] Assume that $f$ is a $K-$quasiregular harmonic mapping in the unit disk so that $\Im f(0)=0$. Assume that $\Re f\in \mathbf{h}^p$ for some $p\in (1,2]$. Then $f\in \mathbf{h}^p$ and we have the inequality
$$\|f\|_p\le c_2(K,p)\|\Re f\|_p.$$
\item[b)]
Assume that $f=(f_1,\dots, f_n)$ is a $K-$quasiregular harmonic mapping of the unit ball into $\mathbb{R}^n$. Assume that $f_1\in \mathbf{h}^p$ for some $p\in (1,2]$. Then $f\in \mathbf{h}^p$ and we have the inequality
$$\|f\|^p_p\le|f(0)|^p+  c_n(K,p)(\|f_1\|^p_p-|f_1(0)|^p).$$
\end{enumerate}
The constant $C_k(K,p)$ is asymptotically sharp when  $p\to 2$.

\end{theorem}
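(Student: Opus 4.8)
The plan is to reduce everything to a pointwise subharmonicity estimate for a suitable power of $|f|$, and then integrate against the Poisson kernel. The model case to keep in mind is the classical proof of the Riesz theorem via the subharmonicity of $|f|^p$ together with a sharp constant coming from a plurisubharmonic-type minorant. Here the key new input is the quasiregularity inequality \eqref{inprof}, which says $|\nabla|f(x)||\ge |Df(x)|/K$; combined with harmonicity this controls the Laplacian of $|f|^p$ from below in terms of the Laplacian of $|f_1|^p$ (resp. $|\Re f|^p$). I would first establish, for $x$ with $f(x)\neq 0$ and $p\in(1,2]$, a differential inequality of the shape
\begin{equation}\label{eq:key}
\Delta |f(x)|^p \ge \frac{p}{c_n(K,p)}\,\Delta\!\left(\text{a harmonic-ish majorant built from }|f_1|^p\right),
\end{equation}
or more precisely compare $\Delta|f|^p$ and $\Delta|f_1|^p$ directly. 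Writing $|f|^p$ and using $\Delta|f|^p = p|f|^{p-2}\big(|\nabla|f||^2(p-1) + |f|\Delta|f|\big)$ and the analogous identity for the harmonic scalar-valued-in-each-coordinate situation is the computational heart: one expands $\Delta|f|^p$ in terms of the singular values $\lambda_1\le\cdots\le\lambda_n$ of $Df^tDf$, uses $\|Df\|^2=\sum\lambda_k$, $|Df|^2=\lambda_n$, $\ell(Df)^2=\lambda_1$, the $K$-quasiregularity bound $\lambda_n\le K^2\lambda_1$, and the fact that each $f_i$ is harmonic so $\Delta|f_i|^p = p(p-1)|f_i|^{p-2}|\nabla f_i|^2$ away from zeros. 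The constant $c_n(K,p)$ should fall out as the extremal value of a ratio optimized over the admissible eigenvalue configurations and over the relative position of $f$ and $f_1$.

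Once \eqref{eq:key} (in integrated form) is in hand, the argument proceeds as follows. For part b), I would apply the Green representation \eqref{green} on the ball $r\mathbb{B}$ to the subharmonic function $|f|^p$ (first checking $|f|^p$ is genuinely subharmonic, handling the zero set of $f$ by the usual approximation $|f|\to(|f|^2+\epsilon^2)^{1/2}$ or by noting subharmonicity survives since $|f|$ is continuous and the bad set has measure zero), getting
\[
\int_{\mathbb{S}}|f(r\zeta)|^p\,d\sigma \;=\; |f(0)|^p + \int_{r\mathbb{B}} G_r(0,y)\,\Delta|f(y)|^p\,dV(y).
\]
Doing the same for $|f_1|^p$ and using the pointwise comparison $\Delta|f|^p\le c_n(K,p)\,\Delta|f_1|^p$ (the inequality going this direction, since $|f_1|\le|f|$ forces the correct sign and the eigenvalue optimization gives exactly the stated constant), together with $\int_{r\mathbb{B}}G_r\Delta|f_1|^p = \int_{\mathbb{S}}|f_1(r\zeta)|^p d\sigma - |f_1(0)|^p$, yields
\[
\int_{\mathbb{S}}|f(r\zeta)|^p d\sigma \le |f(0)|^p + c_n(K,p)\Big(\int_{\mathbb{S}}|f_1(r\zeta)|^p d\sigma - |f_1(0)|^p\Big),
\]
and letting $r\to1$ via \eqref{come} (whose analogue holds for harmonic Hardy functions on the ball, which also delivers $f\in\mathbf{h}^p$ once the right side is bounded) finishes b). Part a) is the case $n=2$ with the extra hypothesis $\Im f(0)=0$ used to identify $\Re f$ with the "first coordinate": here I would write $f=\Re f + i\,\Im f$ as a map into $\mathbb{R}^2$, note $\|f_1\|_p=\|\Re f\|_p$, and—because the stated bound in a) is the cleaner $\|f\|_p\le c_2(K,p)\|\Re f\|_p$ rather than the $|f(0)|^p$-corrected form—use $\Im f(0)=0$ so that $|f(0)|=|\Re f(0)|=|f_1(0)|$, whence $|f(0)|^p + c_2(K,p)(\|f_1\|_p^p-|f_1(0)|^p) \le c_2(K,p)\|\Re f\|_p^p$ since $c_2(K,p)\ge 1$ when $p\le 2$ (this last monotonicity I would record as a small lemma, checking $c_n(K,p)\ge 1$ for $K\ge1$, $1<p\le2$).

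For the asymptotic sharpness as $K\to1$ and $p\to2$, I would exhibit near-extremal mappings: take $f$ holomorphic (so $K=1$) of the form $f = (F+i\,G)$ where $F = \Re(\phi)$ with $\phi$ a conformal-type map whose boundary values approximate the known Riesz extremizer (the one making \eqref{ver}–\eqref{1ver} sharp, built from $(1+\zeta)^{2/p}$-type functions), and observe that $c_2(1,p)=\csc^2(\pi/(2p))\cdot(\text{something})$ should match $\csc^2(\pi/(2\overline p))$ in the limit $p\to 2$ where $\overline p = p$; since $c_n(1,p)\to c_n(1,2)=n$ as... — more carefully, plugging $K=1$ gives $c_n^p(1,p) = n(1+(p-2)/n)/(p-1) = (n+p-2)/(p-1)$, which at $p=2$ equals $n$, and one checks this agrees with the sharp constant in the known harmonic Riesz inequality on the ball for $p=2$ (an $L^2$ computation with spherical harmonics), and that no smaller constant works by testing on a sequence of harmonic polynomials concentrating their gradient appropriately.

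**Main obstacle.** The crux is the pointwise differential inequality \eqref{eq:key}: getting the constant $c_n(K,p)=\big((1+(n-1)K^2)(1+(p-2)/(nK^2))\big)/(p-1)$ exactly, rather than some non-sharp bound, requires carefully setting up the optimization over eigenvalues of $Df^tDf$ under the constraint $\lambda_n\le K^2\lambda_1$ and over the angle between $f$ and $\nabla|f_1|$; the factor $(1+(n-1)K^2)$ suggests the extremal configuration has $\lambda_1=\cdots=\lambda_{n-1}$ at the minimum and $\lambda_n$ at the maximum $K^2\lambda_1$, while the factor $(1+(p-2)/(nK^2))$ encodes the worst-case relative position of the point $f(x)$ on the sphere of radius $|f(x)|$ versus the gradient directions. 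Managing the zero sets of $f$ and $f_1$ (where $|f|^p$ is only $C^1$, not $C^2$, when $p<2$) is a secondary technical point, handled by the standard $\epsilon$-regularization, but it must be done with care so that the limiting constant is not degraded.
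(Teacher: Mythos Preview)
Your approach is essentially the same as the paper's: the paper also applies Green's representation to $|f|^p$ and to $|u|^p=|f_1|^p$, establishes the pointwise inequality $\Delta|f|^p\le c_n(K,p)\,\Delta|u|^p$ via the eigenvalue bounds coming from $K$-quasiregularity (together with $|f|^{p-2}\le|u|^{p-2}$ for $p\le 2$), handles zeros of $u$ by the regularization $w_\epsilon=\sqrt{\epsilon^2+u^2}$, and checks asymptotic sharpness by testing on the identity map at $K=1$, $p=2$. The only remark is that the ``optimization'' you anticipate is in fact a two-line chain: from $\lambda_k\le K^2\lambda_1$ one gets $\|Df\|^2\le(1+(n-1)K^2)\lambda_1\le(1+(n-1)K^2)|\nabla f_1|^2$, and from $|\nabla|f||\ge|Df|/K\ge\|Df\|/(\sqrt{n}K)$ one gets the factor $(1+(p-2)/(nK^2))$; no delicate extremal analysis over ``angles'' is needed, so the obstacle you flag is milder than you suggest.
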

The following theorem is an improvement of the main result of Liu and Zhu \cite{aimzhu}.
\begin{theorem}\label{davidk}  Let $$c(p,K)=\left(\frac{\tan^{p-1}\frac{\pi}{2p}}{\cot(\frac{\pi}{2p})}+(K^2-1)\frac{\sin^{p-1}\frac{\pi}{2p}}{\cos \frac{\pi}{2p}}\right)^{1/p} ,$$ and $$d(p,K)=\left(\cos\left[\frac{\pi }{2 p}\right]^{-p} +(K^2-1)\tan\left[\frac{\pi }{2 p}\right]\right)^{1/p}.$$
\begin{enumerate}
\item[a)]
 Assume that $f$ is a $K-$quasiregular harmonic mapping in the unit disk so that $\arg(f(0))\in (-\pi/(2p), \pi/(2p))$. Assume that $\Re f\in \mathbf{h}^p$ for some $p\in (1,2]$ and that $f(\mathbb{D})\cap(-\infty,0)=\emptyset$. Then $f\in \mathbf{h}^p$ and we have the inequality
$$\|\Im f\|_p\le c(K,p)\|\Re f\|_p.$$ The constant $c(K,p)$ is asymptotically sharp.
\item[b)] Under condition of a)  we have the asymptotically sharp inequality $$\|f\|_p\le d(K,p)\|\Re f\|_p.$$
\end{enumerate}
\end{theorem}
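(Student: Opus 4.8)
The strategy is to combine one sharp pointwise inequality in the target plane with one subharmonicity estimate that isolates the contribution of quasiregularity. First I would reduce to smooth data: replacing $f$ by $f_\rho(z)=f(\rho z)$, $\rho<1$, which is again a $K$-quasiregular harmonic map with $f_\rho(0)=f(0)$ and image inside $f(\mathbb{D})$, it suffices to prove the two inequalities — with constants independent of $\rho$ — for maps $f$ that are $C^\infty$ on $\overline{\mathbb{D}}$ with $f(\overline{\mathbb{D}})$ disjoint from the slit $(-\infty,0]$; letting $\rho\to1$ then gives $f\in\mathbf{h}^{p}$ together with the stated bounds. For such $f$ the principal argument $\arg f\colon\overline{\mathbb{D}}\to(-\pi,\pi)$ is smooth, so $\Re(f^{p}):=|f|^{p}\cos\!\big(p\arg f\big)$ equals $G\circ f$, where $G(w)=\Re(w^{p})$ is the harmonic principal $p$-th power on $\mathbb{C}\setminus(-\infty,0]$.

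Write $\alpha=\pi/(2p)$, so $\tan^{p-1}\alpha/\cot\alpha=\tan^{p}\alpha$. The first ingredient — and the step I expect to be the main obstacle — is the sharp planar inequality
$$|\Im w|^{p}\le \tan^{p}\alpha\;|\Re w|^{p}-\frac{\sin^{p-1}\alpha}{\cos\alpha}\,\Re(w^{p}),\qquad w\in\mathbb{C}\setminus(-\infty,0],$$
together with its analogue for part (b),
$$|w|^{p}\le \cos^{-p}\alpha\;|\Re w|^{p}-\tan\alpha\,\Re(w^{p}),\qquad w\in\mathbb{C}\setminus(-\infty,0],$$
in each case with equality precisely on the two rays $\arg w=\pm\alpha$. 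In polar form $w=\rho e^{i\varphi}$ the first reduces to $\Phi(\varphi):=\tan^{p}\alpha\,|\cos\varphi|^{p}-\tfrac{\sin^{p-1}\alpha}{\cos\alpha}\cos(p\varphi)-|\sin\varphi|^{p}\ge0$ on $(-\pi,\pi)$; one checks $\Phi(\alpha)=\Phi'(\alpha)=0$ and that this critical value is the global minimum, the delicate point being to exclude spurious interior minima on $(\alpha,\pi)$, where $\cos(p\varphi)$ is no longer one-signed (at $p=2$ both inequalities are identities, which is the endpoint case).

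The second ingredient uses quasiregularity. For a $K$-quasiregular harmonic $f$ and a $C^{2}$ function $\Psi$ on $\mathbb{R}^{2}$ one has $\Delta(\Psi\circ f)=\operatorname{tr}\!\big((Df)^{t}\,\mathrm{Hess}\,\Psi(f)\,Df\big)$. Taking $\Psi=G$ (whose Hessian is trace-free of norm $p(p-1)|w|^{p-2}$) and $\Psi(w)=|\Re w|^{p}$ (whose Laplacian is $p(p-1)|\Re f|^{p-2}|\nabla\Re f|^{2}$), and using $\lambda_{2}\le K^{2}\lambda_{1}$ for the eigenvalues of $Df^{t}Df$ together with $|\Re f|^{p-2}\ge|f|^{p-2}$ and $|\nabla\Re f|^{2}\ge\ell(Df)^{2}=\lambda_{1}$, I would obtain the pointwise bound
$$\big|\Delta\,\Re(f^{p})\big|\le (K^{2}-1)\,\Delta\,|\Re f|^{p};$$
hence $\Re(f^{p})+(K^{2}-1)|\Re f|^{p}$ is subharmonic on $\mathbb{D}$ (the failure of $|\Re f|^{p}$ to be $C^{2}$ on the measure-zero set $\{\Re f=0\}$ is harmless, since $|\Re f|^{p}$ is subharmonic there).

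The proof is then assembled as follows. Integrating the first planar inequality over $\mathbf{T}$ gives $\|\Im f\|_{p}^{p}\le\tan^{p}\alpha\,\|\Re f\|_{p}^{p}-\tfrac{\sin^{p-1}\alpha}{\cos\alpha}\int_{\mathbf{T}}\Re(f^{p})\,d\sigma$; the sub-mean-value inequality for $\Re(f^{p})+(K^{2}-1)|\Re f|^{p}$ bounds $\int_{\mathbf{T}}\Re(f^{p})\,d\sigma$ from below by $\Re(f(0)^{p})+(K^{2}-1)\big(|\Re f(0)|^{p}-\|\Re f\|_{p}^{p}\big)$, and since $\arg f(0)\in(-\alpha,\alpha)$ forces $\Re(f(0)^{p})=|f(0)|^{p}\cos(p\arg f(0))\ge0$, the terms at the origin have the right sign to be discarded, leaving exactly $\|\Im f\|_{p}^{p}\le\big(\tan^{p}\alpha+(K^{2}-1)\tfrac{\sin^{p-1}\alpha}{\cos\alpha}\big)\|\Re f\|_{p}^{p}=c(K,p)^{p}\|\Re f\|_{p}^{p}$. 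Part (b) runs word for word with the companion planar inequality, yielding $\|f\|_{p}^{p}\le\big(\cos^{-p}\alpha+(K^{2}-1)\tan\alpha\big)\|\Re f\|_{p}^{p}=d(K,p)^{p}\|\Re f\|_{p}^{p}$. Finally, asymptotic sharpness as $K\to1$ follows by testing against (near-)holomorphic maps whose boundary argument concentrates on the extremal rays $\arg w=\pm\alpha$, along which both planar inequalities are equalities; as noted, the real work lies in the trigonometric inequality and in the Hessian estimate in the presence of the zero sets of $\Re f$ and $f$.
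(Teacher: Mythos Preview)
Your proposal is correct and follows the same architecture as the paper: dilate to reduce to smooth data, apply the Pichorides (resp.\ Verbitsky) pointwise inequality in the target to get $\|v\|_p^p\le A(p)\|u\|_p^p-B(p)\int_{\mathbf T}\Re(f^p)\,d\sigma$, and then bound $\int_{\mathbf T}\Re(f^p)\,d\sigma$ from below via the key Laplacian comparison $|\Delta\Re(f^p)|\le(K^2-1)\,\Delta|u|^p$, discarding the origin terms thanks to $\arg f(0)\in(-\alpha,\alpha)$. The one substantive difference is how that Laplacian comparison is obtained. The paper uses the decomposition $f=g+\bar h$ with $g,h$ holomorphic, computes $f_x^2+f_y^2=4g'\overline{h'}$ so that $|\Delta\Re(f^p)|\le 4p(p-1)|f|^{p-2}|g'h'|\le 4kp(p-1)|u|^{p-2}|g'|^2$, and combines this with $\Delta|u|^p=p(p-1)|g'+h'|^2|u|^{p-2}\ge p(p-1)(1-k)^2|g'|^2|u|^{p-2}$ to get the factor $4k/(1-k)^2=K^2-1$. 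Your route---trace-free Hessian of $\Re(w^p)$ with operator norm $p(p-1)|w|^{p-2}$, together with $|\mathrm{tr}(HS)|\le\|H\|(\lambda_2-\lambda_1)\le(K^2-1)\|H\|\lambda_1$ and $|\nabla u|^2\ge\lambda_1$---is a coordinate-free repackaging that lands on the identical constant and has the advantage of not being tied to the planar $g+\bar h$ formalism. The paper simply cites the Pichorides and Verbitsky inequalities (with a one-line symmetry argument extending Pichorides from $|x|\le\pi/2$ to $|x|\le\pi$) rather than re-deriving them, and writes the mean-value step as an explicit Green-formula identity rather than as subharmonicity of $\Re(f^p)+(K^2-1)|u|^p$; these are cosmetic differences only.
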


We also have the following Kolmogorov type theorem
\begin{theorem}\label{terca}
Assume that $f=u+i v$ is harmonic $K-$quasiregular mapping in the unit disk $\mathbb{D}$ so that $u>0$ and let $0<p,r<1$ and that $v(0)=0$. Then  $u\in h^1$ and $v\in h^p$ and
\begin{equation}\label{dyb}
M^p_p(r,v)\le \sec\frac{\pi p}{2} (K^2M^p_1(r,u)-(K^2-1)M^p_p(r,u))
\end{equation}
and
\begin{equation}\label{treb}
(2-K^2)M^p_1(r,u)\le (2-K^2) M^p_p(r,u) +\cos (p\pi/2) M^p_p(r,v).
\end{equation}
The constants in \eqref{dyb} and \eqref{treb} are asymptotically sharp when $K\to 1$.

\end{theorem}

\begin{remark}\label{vere}
Astala and Koskela
proved  in \cite[Theorem~6.1]{astala} that if $f$ is quasiconformal in $\mathbb{B}$ with one of its coordinate
functions belongs to $\mathbf{h}^p$, then $f\in h^q$ for all $q<p$. Furthermore, they showed that $q$ can not reach $p$ by giving a concrete example. Here we have a better outcome, but under essentially stronger conditions. The proofs of those theorems are similar to the proof of \cite[Theorem~4.1]{duren}, and this method has been also used by Liu and Zhu in \cite{aimzhu}. Theorem~\ref{davidk} improves the main result in \cite{aimzhu}, because we do not assume that $u$ is positive and get the same outcome.

Theorem~\ref{terca} is Kolmogorov  theorem for quasiregular harmonic mappings. We want to point out that it seems that inequality \eqref{treb} is new also for holomorphic functions.

\end{remark}

\section{Proof of main results}
\begin{proof}[Proof of Theorem~\ref{david}]
We will assume that the mapping $f$ has a smooth extension to the boundary. If not, then we take the dilatation $F(x)=f(r x)$, with $r<1$ and let $r\to 1$.

{\bf The case $n=2$}.
Since $$\|f\|_p^p=\frac{1}{2\pi}\int_{-\pi}^{\pi}|f(e^{it})|^p dt,$$ by using \eqref{green} to the function $w(z)=|f(z)|^p$ we get
$$|f(0)|^p= \int_{0}^{2\pi} |f(e^{it})|^p \frac{dt}{2\pi}-\frac{1}{2\pi}\int_{\mathbb{D}}\log \frac{1}{|w|} \Delta |f(w)|^p du dv.$$
Let  $u=\Re f$. Let $dA(z)=dxdy$. Then by \cite[Theorem~1.B]{pavlovic2}
\begin{equation}\label{pav}\|u\|_p^p = |u(0)|^p+\frac{p(p-1)}{2\pi }\int_{\mathbb{D}}|u|^{p-2} |\nabla u|^2 \log \frac{1}{|z|}dA(z).\end{equation}
Let $v(0)=0.$
 We first have $$\|f\|_p^p =|f(0)|^p+\frac{1}{2\pi }\int_{\mathbb{D}}\Delta|f(z)|^p\log\frac{1}{|z|}dA(z) $$
Since $$\Delta |f|^p=\frac{p(p-2)}{4}|f|^{p-4}\|\nabla |f|^2\|^2+\frac{p}{2}|f|^{p-2}\Delta|f|^2$$
and
$$ \frac{p}{2}|f|^{p-2}\Delta|f|^2=2p|f|^{p-2} (|g'(z)|^2+|h'(z)|^2),$$ and because  $f$ is $K-$quasiregular, by \eqref{inprof} we have $$|\nabla |f|^2|^2=4|f|^2|\nabla |f||^2\ge 4|f|^2 \|Df\|^2K^{-2}.$$
So \[\begin{split}\Delta |f|^p&\le \left(\frac{p(p-2)}{K^2} +2p \right)|f|^{p-2} (|g'(z)|^2+|h'(z)|^2)
\\&\le \left(\frac{p(p-2)}{K^2} +2p \right)|u|^{p-2} (|g'(z)|^2+|h'(z)|^2)
\\&\le \frac{1+K^2}{2}\left(\frac{p(p-2)}{K^2} +2p \right)|u|^{p-2} |g'(z)+h'(z)|^2.
\end{split}
\]
For $p\in(1,2)$ and $v(0)=0$ since $|\nabla u|^2 = |g'+h'|^2$, by the previous estimate we obtain that
\[\begin{split}\|f\|_p^p&\le
|f(0)|^p+\left(\frac{p(p-2)}{K^2} +2p  \right)\int_{\mathbb{D}}|f|^{p-2} (|g'(z)|^2+|h'(z)|^2) \log \frac{1}{|z|}\frac{dA(z)}{2\pi}
 \\& \le |f(0)|^p+ \frac{1+K^2}{2}\left(\frac{p(p-2)}{K^2} +2p \right)\int_{\mathbb{D}}|u|^{p-2} |\nabla u|^2 \log \frac{1}{|z|}\frac{dA(z)}{2\pi}
\\&=|f(0)|^p+ \frac{1+K^2}{2}\left(\frac{p(p-2)}{K^2} +2p \right)\frac{1}{p(p-1)}(\|u\|_p^p-|u(0)|^p)
\\&\le \left(\frac{(p-2)}{K^2} +2 \right)\frac{1+K^2}{2(p-1)} \|u\|_p^p.\end{split}
\]

{\bf The case $n>2$}.
This proof is similar to the proof of the case $n=2$. The only difference appears due to the different Green functions.
Assume that $u>0$ and assume that $\omega_n$ is the $n-1$ dimensional area of $\mathbb{S}$ and let $c_n=1/(\omega_n(n-2))$.
Then by \eqref{green} we have 
$$\|f\|_p^p =|f(0)|^p+c_n\int_{\mathbb{B}}\Delta|f(x)|^p\left(|x|^{2-n}-1\right)dV(x).$$
Further, we have (see e.g. \cite{jmaa})
\[\begin{split}\Delta |f|^p&=\frac{p(p-2)}{4}|f|^{p-4}\|\nabla |f|^2\|^2+\frac{p}{2}|f|^{p-2}\Delta|f|^2
\\&=\frac{p(p-2)}{4}|f|^{p-4}\|\nabla |f|^2\|^2+p|f|^{p-2}\|Df\|^2.\end{split}\]
Since $$|Df|^2=\max\{{\lambda_k}, k\in\{1,\dots, n\}\}\ge\frac{1}{n}\sum_{k=1}^m\lambda_k=\frac{1}{n}\|Df\|^2,$$ where $\lambda_k$ are the eigenvalues of $(Df)^\ast Df$, in view of \eqref{inprof}, we get that

$$\|\nabla |f|^2\|=4 |f|^2\|\nabla |f|\|^2\ge 4|f|^2 \frac{|Df|^2}{K^2}\ge 4|f|^2 \frac{\|Df\|^2}{nK^2}.$$
Thus $$\Delta |f|^p\le (p +\frac{p(p-2)}{n K^2})|f|^{p-2}\|Df\|^2.$$
On the other let $u=f_1$. Then we have $$\Delta |u|^p=p(p-1)|\nabla u|^2|u|^{p-2}$$
Thus $$\Delta |f|^p\le\frac{(1+(n-1)K^2)(1 +\frac{(p-2)}{n K^2})}{(p-1)}\Delta u^p. $$
Now we use \cite[Theorem~4.3]{pavlovic2} which states the following $$\|u\|_p^p=|u(0)|^p+\frac{p(p-1)}{n(n-2)}\int_{\mathbb{B}}|u(x)|^{p-2}|\nabla u(x)|^2(|x|^{2-n}-1)dV(x).$$
Then for $$C(K,p)=\frac{(1+(n-1)K^2)(1 +\frac{(p-2)}{n K^2})}{(p-1)}$$
\[
\begin{split}
\|f\|_p^p&\le|f(0)|^p+c_n\int_{\mathbb{B}}\Delta |f|^p  \left(|x|^{2-n}-1\right)dV(x)
\\&\le |f(0)|^p+C(K,p)c_n\int_{\mathbb{D}}\Delta |u|^p \left(|x|^{2-n}-1\right)dV(x)
\\&\le |f(0)|^p+C(K,p)\left(\|u\|_p^p-|u(0)|^p\right).
\end{split}
\]

To prove the last statement of the theorem, choose $I(x)=x$ and consider first the case $K=1$. Then the equality holds in $\|I\|_2=C_2(1,2)\|I_1\|_2$, where $I_1(x)=x_1$. Here $C_2(1,2)=\sqrt{n}$.
In order to see this observe that $$\frac{1}{\omega_n}\int_{\mathbb{S}} x_1^2 d\sigma(x)=\frac{1}{\omega_n}\int_{\mathbb{S}} x_2^2 d\sigma(x)=\dots=\frac{1}{\omega_n}\int_{\mathbb{S}} x_n^2 d\sigma(x).$$ So summing altogether we get $$\frac{1}{\omega_n}\int_{\mathbb{S}} x_1^2 d\sigma(x)=\frac{1}{n}.$$  In a similar way we prove that  $f(x) =(x_1,K x_2,\dots, K x_n)$  is $K-$quasiconformal harmonic and we have $$\frac{\|f\|_2^2}{\|f_1\|_2^2}=C(K,2)=(1+(n-1)K^2).$$
\end{proof}

\subsection{Power of a quasiregular function}

Observe that as in Remark~\ref{vere},  $\Re(f^p(z))=|f(z)|^p \cos (p\arg (f(z)))$ is smooth in $\mathbb{D}$, because $\arg: \mathbb{C}\setminus (-\infty,0]\to (-\pi,\pi)$ is smooth.  Then after straightforward calculation, we get
$$\Delta (f^p) =p(p-1) f^{p-2}(f_x^2+f_y^2).$$ Then $$\Delta \Re (f^p) =p(p-1) \Re(f^{p-2}(f_x^2+f_y^2)).$$  Now we have $$f_x =g' +\overline{h'}$$  and $$f_y =i(g'-\overline{h'}).$$
So $$f_x^2+f_y^2 = 4 g'\overline{h'}.$$
Thus for $R=|f|$, $$|\Delta \Re (f^p)|\le p(p-1) R^{p-2}(|f_x^2+f_y^2|=4p(p-1)R^{p-2}|g'h'|.$$
Therefore  \begin{equation}\label{fut}|\Delta \Re (f^p)|\le 4 p(p-1) R^{p-2}|g'\cdot h'|\le 4 k p(p-1) R^{p-2}| g'|^2. \end{equation}
\begin{proof}[Proof of Theorem~\ref{davidk}]
As in Theorem~\ref{david}, we will assume that the mapping $f$ has a smooth extension to the boundary. If not, then we take the dilatation $F(z)=f(r z)$, with $r<1$ and let $r\to 1$.

a)
For $p\in[1,2]$ by \cite[Lemma 2.1]{pik}, we have for $p\in[1,2]$ and $|x|\le \pi$, \begin{equation}\label{cont}|\sin x|^p \le  A(p) |\cos x|^p-B(p)\cos(px),\end{equation} where $$A(p)=\frac{\tan^{p-1}\frac{\pi}{2p}}{\cot(\frac{\pi}{2p})},  \ \ B(p)=\frac{\sin^{p-1}\frac{\pi}{2p}}{\cos \frac{\pi}{2p}} .$$ Notice that the corresponding inequality in \cite{pik} is formulated only for $|x|\le \pi/2$, but the inequality \eqref{cont} continues to hold for $|x|\le \pi$. Indeed, just observe that if $x\in[\pi/2,\pi]$, then $y=\pi-x\in[0,\pi/2]$ and \eqref{cont} hold for $y$ instead of $x$. But we also have $\cos (p(\pi-x))\ge \cos px$ for $x\in[\pi/2,\pi]$. This proves the claimed inequality.

Thus $$|v(r e^{it})|^p\le A(p)|u(r e^{it})|^p-B(p)\Re(f^p(r e^{it})).$$

In particular \begin{equation}\label{vf}\|v\|_p^p\le A(p)\|u\|_p^p-B(p)\int_{-\pi}^{\pi}\Re(f^p( e^{it}))\frac{dt}{2\pi}.\end{equation}
Further, because $\Re f^p$ is smooth, $$\int_{-\pi}^{\pi}\Re(f^p(( e^{it})))\frac{dt}{2\pi}= \Re(f^p(0))+\frac{1}{2\pi}\int_{\mathbb{D}}\Delta \Re f^p(w) \log\frac{1}{|w|}du dv,$$ and
$$\int_{-\pi}^{\pi}|u( e^{it})|^p\frac{dt}{2\pi}= |u(0)|^p+\frac{1}{2\pi}\int_{\mathbb{D}}\Delta |u^p(w)| \log\frac{1}{|w|}du dv.$$
Further, because $u=\Re (g+h)$, we have \begin{equation}\label{equ}\begin{split}\Delta |u|^p&=p(p-1)|\nabla u|^2|u|^{p-2}\\&=p(p-1)|g'+h'|^2|u|^{p-2}\\&\ge p(p-1)(1-k)^2|g'|^2 |u|^{p-2}.\end{split}\end{equation}

Then \eqref{fut} and \eqref{equ} imply that $$|\Delta \Re (f^p)|\le \frac{4k}{(1-k)^2}\Delta |u|^p.$$

Thus \[\begin{split}\int_{-\pi}^{\pi}\Re(f^p( e^{it}))\frac{dt}{2\pi}&=
\Re(f^p(0))+\frac{1}{2\pi}\int_{\mathbb{D}}\Delta \Re f^p(w) \log\frac{1}{|w|}du dv,
\\& \ge  \Re(f^p(0))-\frac{1}{2\pi}\int_{\mathbb{D}}|\Delta \Re f^p(w)| \log\frac{1}{|w|}du dv\\\
&\ge
 \Re(f^p(0))-\frac{4k}{(1-k)^2}\frac{1}{2\pi}\int_{\mathbb{D}}\Delta| u^p(w)| \log\frac{1}{|w|}du dv\\
&=\Re(f^p(0))-\frac{4k}{(1-k)^2}\left(-|u(0)|^p+\int_{-\pi}^{\pi}|u( e^{it})|^p\frac{dt}{2\pi}\right)
\\&=\Re(f^p(0))+\frac{4k}{(1-k)^2}|u(0)|^p- \frac{4k}{(1-k)^2}\int_{-\pi}^{\pi}|u( e^{it})|^p\frac{dt}{2\pi}.
 \end{split}\]
Now if $\theta =\arg (f(0))\in (-\pi/(2p), \pi/(2p)$  then $$\Re(f^p(0))+\frac{4k}{(1-k)^2}|u(0)|^p\ge |f(0)|^p\cos (\theta p)\ge 0.$$
Thus $$\int_{-\pi}^{\pi}\Re(f^p(( e^{it})))\frac{dt}{2\pi}\ge -\frac{4k}{(1-k)^2}\int_{-\pi}^{\pi}|u( e^{it})|^p\frac{dt}{2\pi}.$$
Hence
\[\begin{split}\|v\|_p^p&\le A(p)\|u\|_p^p-B(p)\int_{-\pi}^{\pi}\Re(f^p(( e^{it})))\frac{dt}{2\pi}
\\ &\le   A(p)\|u\|_p^p+B(p) \frac{4k}{(1-k)^2}\|u\|_p^p
\\&= \left(A(p)+ (K^2-1)B(p)\right)\|u\|_p^p.
\end{split}\]
b) In this case we use the following inequality (\cite{ver}): for every $t\in[-\pi,\pi]$ we have
$$-1+\cos\left[\frac{\pi }{2 p}\right]^{-p} |\cos t|^p -\cos (pt) \tan\left[\frac{\pi }{2 p}\right]\ge 0.$$
Then the proof is very similar to the previous proof, but instead of \eqref{vf}
we use the inequality \begin{equation}\label{vf1}\|f\|_p^p\le C(p)\|u\|_p^p-D(p)\int_{-\pi}^{\pi}\Re(f^p(( e^{it})))\frac{dt}{2\pi},\end{equation} where $$C(p)=\cos\left[\frac{\pi }{2 p}\right]^{-p} \text{ and }D(p)=\tan\left[\frac{\pi }{2 p}\right].$$
\end{proof}
\begin{remark}
The proof of Theorem~\ref{davidk} also works under more general condition $$\cos (p \theta)+\frac{4k}{(1-k)^2} |\cos \theta|^p\ge 0,$$ where $\theta=\arg(f(0))\in(-\pi, \pi]$. 
\end{remark}

\begin{proof}[Proof of Theorem~\ref{terca}]
Notice that in this case (for positive $u$) the means $$M_p(r, u)=\left(\int_{0}^{2\pi}u^p(r e^{it})\frac{dt}{2\pi}\right)^{1/p}$$ are decreasing (\cite[Sec.~3.4]{pavlovic}) and thus $\|u\|_p=u(0)$ for every $0<p<1$.
As in proof of Theorem~\ref{davidk} we have \begin{equation}\label{fut}|\Delta \Re (f^p)|\le 4 p(1-p) r^{p-2}|g'\cdot h'|\le 4 k p(1-p) r^{p-2}| g'|^2. \end{equation}
\begin{equation}\label{equ1}\begin{split}\Delta |u|^p&=p(p-1)|\nabla u|^2|u|^{p-2}\\&=p(1-p)|g'+h'|^2|u|^{p-2}.\end{split}\end{equation}
Hence $$-\Delta |u|^p\ge p(1-p)(1-k)^2|g'|^2 |u|^{p-2}.$$
Thus
$$|\Delta \Re (f^p)|\le -\frac{4k}{(1-k)^2}\Delta|u|^p.$$
Now by \eqref{green} $$\int_0^{2\pi} u^p(re^{it}) \frac{dt}{2\pi}=u^p(0)+\frac{1}{2\pi }\int_{\mathbb{D}_r} \Delta u^p\log \frac{r}{|z|} dx dy,$$ where $\mathbb{D}_r=\{rz, |z|<1\}$.
By using again \eqref{green} $$\int_0^{2\pi}  \Re f^p(re^{it}) \frac{dt}{2\pi}=\Re f^p(0)+\frac{1}{2\pi }\int_{\mathbb{D}_r} \Delta \Re f^p\log \frac{r}{|z|} dx dy.$$
Thus
\begin{equation}\label{sh1}\begin{split}\int_0^{2\pi}  \Re f^p(re^{it}) \frac{dt}{2\pi}&\ge \Re f^p(0)+\frac{4k}{(1-k)^2}\frac{1}{2\pi }\int_{\mathbb{D}_r} \Delta |u|^p\log \frac{r}{|z|} dx dy
\\&=\Re f^p(0)+\frac{4k}{(1-k)^2}\left(\int_0^{2\pi} u^p(re^{it}) \frac{dt}{2\pi}-u^p(0)\right)
\\&=u^p(0)+\frac{4k}{(1-k)^2}\left(\int_0^{2\pi} u^p(re^{it}) \frac{dt}{2\pi}-u^p(0)\right)\end{split}\end{equation}
and
\begin{equation}\label{sh2}\begin{split}\int_0^{2\pi}  \Re f^p(re^{it}) \frac{dt}{2\pi}&\le \Re f^p(0)-\frac{4k}{(1-k)^2}\frac{1}{2\pi }\int_{\mathbb{D}_r} \Delta |u|^p\log \frac{r}{|z|} dx dy
\\&=\Re f^p(0)-\frac{4k}{(1-k)^2}\left(\int_0^{2\pi} u^p(re^{it}) \frac{dt}{2\pi}-u^p(0)\right)
\\&= (u(0))^p-\frac{4k}{(1-k)^2}\left(\int_0^{2\pi} u^p(re^{it}) \frac{dt}{2\pi}-u^p(0)\right),\end{split}\end{equation} because $v(0)=0$.
Since $u$ is harmonic, by mean value property we have $$\int_0^{2\pi}  \Re f^p(re^{it}) \frac{dt}{2\pi}\le \left(\int_0^{2\pi}  u(re^{it}) \frac{dt}{2\pi}\right)^p+\frac{4k}{(1-k)^2}\left(u^p(0)-\int_0^{2\pi} u^p(re^{it}) \frac{dt}{2\pi}\right).$$
Further by \cite[Eq.~22]{pik}, $$ \cos p x\ge \cos (p/2\pi)|\sin x|^p$$ and thus for $f(re^{it})=R e^{ix}$, $$\Re f^p(re^{it}) \ge \cos\frac{\pi p}{2}|\Im f(re^{it})|^p.$$
Hence by \eqref{sh2}, $$M^p_p(r,v)\le \sec\frac{\pi p}{2} (K^2 M_1^p(r,u)-(K^2-1)M_p^p(r,u)) .$$
Moreover, by using again \cite[Eq.~22]{pik}  $$\cos (px)\le |\cos x|^p+\cos (p\pi/2)|\sin x|^p,$$ for every $|x|\le \pi$. Thus by using the previous inequality to $f(re^{it})=R e^{ix}$, we have $$\Re f^p(re^{it})\le |u(re^{it})|^p+\cos (p\pi/2)|v(re^{it})|^p.$$
Hence, in view or \eqref{sh1},
$$u^p(0)+\frac{4k}{(1-k)^2}\left(\int_0^{2\pi} u^p(re^{it}) \frac{dt}{2\pi}-u^p(0)\right)\le M^p_p(r,u)+\cos (p\pi/2)M^p_p(r,v).$$
And therefore $$(2-K^2)M^p_1(r,u)\le (2-K^2) M^p_p(r,u) +\cos (p\pi/2) M^p_p(r,v),$$ because $u(0)=M_1(r,u)$ for every $r$.

Observe that for $K=1$, the constant in \eqref{dyb} coincides with the corresponding sharp constant in Kolmogorov theorem (\cite[Theorem~4.2]{duren}). Similarly the constant in \eqref{treb} is sharp. In this case we make use of $f(z)=\frac{1+z}{1-z}$. Then the equality is attained in \eqref{treb} for $K=1$ and for the cases $r=0$ and $r=1$. The case $r=0$ is easy. For $r=1$ we use the formula $$2\int_{0}^{\pi}\cot^{p}\frac{t}{2}dt=2\pi \sec \frac{\pi p}{2}$$ to obtain the equality.
\end{proof}
\begin{remark} We expect the positiveness of real part $u$ of $f$ in Theorem~\ref{terca} is redundant. We want to mention that it is an open question whether  the constant $\sec \frac{\pi p}{2}$ is sharp in \eqref{dyb}, for the class of holomorphic mappings without positiveness condition on the real part (\cite{pik}). On the other hand, the validity of Kolmogorov theorem for quasiregular harmonic mappings for a certain constant which is not as good as in Theorem~\ref{terca} can be proved.  We use the so called (Littlewood-Paley) $g$-function to prove a more general result. The idea of using $g-$function for this purpose but for $p>1$ appears in a recent preprint by Chen and Huang  \cite{chen}.
\end{remark}
\begin{proposition}\label{propo}
Let $f=g+\bar h$ be a $K-$quasiregular harmonic mapping, and assume that $0<p<1$ and that $\Im f(0)=0$.  Assume also that $\re f\in h^1$. Then $f\in h^p$ and there is a constant $C_0(K,p)$ so that \begin{equation}\label{pavlovi}\|f\|_p\le C_0(K,p)\|\Re f\|_1.\end{equation}
\end{proposition}
\begin{proof} 
 Note first that if $H$ is an analytic function defined in the unit disk with $H(0)=0$, then we have the double inequality $$\|H\|_p\le c_1(p)\|G[H]\|_p\le c_2(p)\|H\|_p,$$ where \begin{equation}\label{cand}G[H](z)=\left(\int_{0}^1|H'(r z)|^2(1-r)dr\right)^{1/2}.\end{equation} This hold due to Calderon type theorem of Pavlovi\' c \cite[Theorem~10.6]{pavlovic}.
Assume first that $g(0)=h(0)=0$. 
Let $F= g+h$. Then $\Re F=\Re f$.

Since $f$ is quasiregular we have $$|g'(z)|\le \frac{1+K}{2}|F'(z)|, \ \ z\in\mathbb{D}.$$
Then by using  \eqref{cand}, and classical Kolmogorov theorem, we have \[\begin{split}\|g\|_p&\le  c_1(p)\|G[g]\|_p
\\&\le \frac{1+K}{2}c_1(p)\|G[F]\|_p
\\&\le \frac{1+K}{2}c_1(p)\cdot c_2(p)\|F\|_p
\\&\le\frac{1+K}{2}c_3(p)\|\Re F\|_1=C_1(K,p)\|\Re F\|_1.\end{split}\]
Similarly we obtain $$\|h\|_p\le C_1(K,p)\|\Re F\|_1.$$
Since $\|f\|^p_p\le \|g\|^p_p+\|h\|^p_p$, we obtain  \eqref{pavlovi} with $C_0(K,p)=C(K,p):=2^{1/p} C_1(K,p)$.
If $g(0)\neq 0$ or $h(0)\neq 0$, then  we use the inequalities 
\[\begin{split}\|f\|^p_p&\le\|f-f(0)\|^p_p+\|f(0)\|^p_p\\&= \|f-f(0)\|^p_p+|\Re f(0)|^p\\&\le C^p(K,p)\|\Re(f-f(0))\|^p_1+|\Re f(0)|^p\le (2C^p(K,p)+1)\|\Re f\|^p_1,\end{split}\] because $|\Re f(0)|\le \|\Re f\|_1$ and $f(0)=\Re f(0)$. In this case $C_0(K,p)=(2C^p(K,p)+1)^{1/p}$.
\end{proof}
\subsection*{Acknowledgments} I would like to thank Professor Shaolin Chen for the useful discussion on this topic and sharing an earlier version of the manuscript \cite{chen}.

\end{document}